\documentclass[12pt]{amsart}
\usepackage[utf8]{inputenc} 
\usepackage[T1]{fontenc}
\usepackage{color}
\usepackage{comment}
\usepackage{times}
\usepackage[hidelinks]{hyperref}
\usepackage{url}  
\usepackage{enumerate,latexsym}
\usepackage{graphicx}
\usepackage{subcaption}
\usepackage{array}
\usepackage{makecell}
\usepackage{mathtools}

\newcolumntype{M}[1]{>{\centering\arraybackslash}m{#1}}
\usepackage{longtable}

\usepackage{fullpage}

\usepackage{amsmath,amsthm,amsfonts,amssymb}
\usepackage{graphicx}
\usepackage{dsfont}
\usepackage{tkz-berge}
\usepackage{tikz-cd}
\usepackage{float}
\usetikzlibrary{knots, decorations.markings, positioning, arrows.meta, decorations.pathreplacing}
\usepackage{tkz-berge}

\makeatletter
\def\namedlabel#1#2{\begingroup
 #2%
 \def\@currentlabel{#2}%
 \phantomsection\label{#1}\endgroup
}
\makeatother

\usepackage[lite]{amsrefs}

\renewcommand{\PrintDOI}[1]{\href{http://dx.doi.org/\detokenize{#1}}{doi: \detokenize{#1}}%
	\IfEmptyBibField{pages}{, (to appear in print)}{}}

\theoremstyle{plain}
\newtheorem*{theorem*}{Theorem}
\newtheorem*{thmex*}{Theorem~\ref{example}}
\newtheorem*{thmasymp*}{Theorem~\ref{thmAsymp}}
\newtheorem{theorem}{Theorem}[section]

\newtheorem{corollary}[theorem]{Corollary}

\newtheorem{remark}[theorem]{Remark}

\newtheorem{example}[theorem]{Example}

\theoremstyle{definition}
\newtheorem{definition}[theorem]{Definition}

\newcommand{\ben}{\begin{enumerate}}
\newcommand{\een}{\end{enumerate}}

\newcommand{\ed}{\end{document}}

\definecolor{rrr}{rgb}{.9,0,.1}

\definecolor{rr}{rgb}{.8,0,.3}

\usepackage{pdfsync}
\graphicspath{ {./images/} }

\title[Subquandle Quiver Decategorification]{Polynomial Link Invariants via Decategorification of Subquandle Quiver Filtrations}

\author[J. Ceniceros]{Jose Ceniceros}
\address{Mathematics and Statistics Department, Hamilton College, Clinton, NY, USA}
\email{jcenicer@hamilton.edu}

\author[B. Chidley]{Benjamin Chidley}
\address{Mathematics and Statistics Department, Hamilton College, Clinton, NY, USA}
\email{bchidley@hamilton.edu}

\author[J.Vaughn]{Jackson Vaughn}
\address{Mathematics and Statistics Department, Hamilton College, Clinton, NY, USA}
\email{jvaughn@hamilton.edu}

\usetikzlibrary{external}
\begin{document}

\begin{abstract}
In this paper, we introduce the notion of subquandle filtrations for finite quandles as a foundation to construct algebraic and combinatorial link invariants. A nested sequence of subquandles naturally induces a filtration of the associated quandle coloring quiver by subquivers. By decategorifying these subquivers, we define two new polynomial-valued invariants of links: the disrespectful polynomial and the graded disrespectful polynomial, which measure how a chosen set of quandle endomorphisms respects and disrespects subquandle structure. Finally, we illustrate the distinguishing power of these new tools by comparing them to both classical quandle counting invariants and the quiver in-degree polynomial.
\end{abstract}

\maketitle
\section{Introduction}

\emph{Quandles} are algebraic structures whose defining axioms encode the Reidemeister moves of classical knot theory. First introduced independently by Joyce \cite{Joyce} and Matveev \cite{Matveev}, the theory associates to each link $L$ a fundamental quandle, $Q(L)$. Joyce and Matveev demonstrated that $Q(L)$ is a complete invariant of links up to mirror image and orientation reversal. Despite its theoretical power, the fundamental quandle is a difficult object to compare directly. Consequently, a major focus in the field has been the development of computable invariants derived from $Q(L)$. A standard approach is the quandle counting invariant, which computes the cardinality of the set of homomorphisms from $Q(L)$ to a finite target quandle $X$. Subsequent work has focused on generalizing and enhancing this counting invariant to capture even more topological information \cites{CJKLS, FJK, N5}.

A specific enhancement that plays a central role in this article was introduced by Cho and Nelson \cite{CN}. By considering subsets of the set of endomorphisms $\operatorname{End}(X)$ of a quandle $X$, they categorified the quandle counting invariant of links. Specifically, they defined a quiver-valued invariant of links called the \emph{quandle coloring quiver}. From this quiver, Cho and Nelson derived families of polynomial invariants via decategorification, which they called the \emph{in-degree polynomial}. Since the introduction of the quandle coloring quiver, several researchers have studied properties and generalizations of the quandle coloring quiver; see \cites{BaCa, CCN, CN2, ZhLi}.

In this article, we introduce the concept of \emph{subquandle filtrations} for finite quandles as a foundation to construct enhancements to the quandle counting invariant. A subquandle filtration naturally induces both a sequence of classical counting invariants and a filtration of the quandle coloring quiver. By leveraging these subquivers, we derive two new and effective families of polynomial invariants. At each filtration stage, we introduce the \emph{disrespectful polynomial}, which characterizes the behavior of link quandle colorings under a chosen set of endomorphisms by partitioning endomorphisms based on their preservation or violation of the subquandle structure. Building upon this, we define the \emph{graded disrespectful polynomial}, a jazzed-up disrespectful polynomial that explicitly tracks coloring depths across the filtration levels and a new measure we call the defect weights. This approach allows us to capture structural distinctions between links that classical counting methods and the ungraded disrespectful polynomial miss. Additionally, while the in-degree polynomial introduced by Cho and Nelson \cite{CN} focuses on the incoming arrows at the vertices of the quiver, our graded disrespectful polynomial explicitly incorporates the layer structure induced by the filtration. Thus, our graded disrespectful polynomial can distinguish knots and links with the same in-degree polynomial, as we demonstrate in Section~\ref{Examples}.

The paper is organized as follows. Section~\ref{QuandlesandQuivers} reviews the basics of links, quandles, and quandle quivers. Section~\ref{Subquandlefiltration} introduces a subquandle filtration that induces a filtration on the quandle quiver, from which we derive a new package of invariants. Finally, Section~\ref{Examples} provides computations demonstrating that these invariants enhance existing ones or provide distinct topological information.

\section{Quandles and Quivers}\label{QuandlesandQuivers}

In this section, we give a brief overview of the basic definitions and results of quandles and quivers; for more details, see \cites{EN,Joyce,Matveev}. We begin with the following definition.

\begin{definition}
A set $X$ together with a binary operation $\triangleright$ is a \emph{quandle} if it satisfies the following:
    \begin{enumerate}
        \item for all $x \in X$, $x \triangleright x = x;$
        \item for all $y \in X$, the map $\beta_y : X \to X$ defined by $\beta_y(x) = x \triangleright y$ is a bijection;
        \item for all $x, y, z \in X$, $(x \triangleright y) \triangleright z = (x \triangleright z) \triangleright (y \triangleright z)$.
    \end{enumerate}
It is also common to define condition (2) by explicitly introducing a right inverse operation $\triangleright^{-1}$ such that $(x \triangleright y) \triangleright^{-1} y = x$ and $(x \triangleright^{-1} y) \triangleright y = x$ for all $x, y \in X$. In this article, we will use the notation of the right inverse operation. We also note that $(X, \triangleright^{-1})$ is also a quandle.
\end{definition}

\begin{example}
Below are some common examples of quandles:
    \begin{enumerate}
        \item The set $\mathbb{Z}_n$ forms a quandle $(\mathbb{Z}_n, \triangleright)$, called the dihedral quandle, where $x \triangleright y \equiv 2y - x \pmod{n}$ for all $x, y \in \mathbb{Z}_n$.
        \item All groups $G$ form a quandle $(G, \triangleright)$, called the conjugation quandle, where $x \triangleright y = y^{-1}xy$ for all $x, y \in G$.
        \item There also exist trivial quandles $(X, \triangleright)$ such that $x \triangleright y = x$ for all $x, y \in X$. Note that trivial quandles need not be of order 1; they can be of any order.
    \end{enumerate}
\end{example}

\begin{definition}
Let $(X,\triangleright)$ be a quandle. If a subset $S \subseteq X$ is closed under the operations $\triangleright, \triangleright^{-1}$, then $(S, \triangleright)$ is a \emph{subquandle} of $(X,\triangleright)$.
\end{definition}

In this article, we focus on finite quandles. We can take advantage of the fact that a finite quandle's operation can be fully defined using an operation table. 

\begin{example}
    The dihedral quandle of order $4$, denoted $(\mathbb{Z}_4, \triangleright)$, has the operation $x \triangleright y \equiv 2y - x \pmod 4$, which can be fully represented by the following operation table:
    \[
    \begin{array}{c|cccc}
    \triangleright & 0 & 1 & 2 & 3 \\
    \hline 
    0 & 0 & 2 & 0 & 2 \\
    1 & 3 & 1 & 3 & 1 \\
    2 & 2 & 0 & 2 & 0 \\ 
    3 & 1 & 3 & 1 & 3 \\
    \end{array}.
    \]
\end{example}

Let $(X, \triangleright_X)$ and $(Y, \triangleright_Y)$ be quandles. The map $f: X \to Y$ is a \emph{quandle homomorphism} if $f(x \triangleright_X y) = f(x) \triangleright_Y f(y)$ for all $x, y \in X$. We will denote the set of all homomorphisms from $X$ to $Y$ by $\text{Hom}(X, Y)$. Furthermore,  if $f$ is a bijection, then $f$  is a \emph{quandle isomorphism}. Lastly, the map $g:X \rightarrow X$ is a \emph{quandle endomorphism} if $g(x \triangleright_X y) = g(x) \triangleright_X g(y)$ for all $x, y \in X$. We will denote the set of all endomorphisms of $X$ by $\text{End}(X)$. Let $X = \{x_1, \dots, x_n\}$ be a finite quandle. Moving forward, we will represent an endomorphism $\phi \in \operatorname{End}(X)$ by the $n$-tuple $(\phi(x_1), \phi(x_2), \dots, \phi(x_n))$. Additionally, we will denote a quandle by simply $X$ instead of $(X, \triangleright)$ whenever there is no ambiguity or need for $\triangleright$.

There is a fundamental connection between links and quandles; indeed, the quandle axioms are directly motivated by the Reidemeister moves. Using the coloring rule illustrated in Figure~\ref{fig:ColoringRule}, it is a standard exercise to show that the three quandle axioms correspond precisely to these moves. Furthermore, it suffices to show that the axioms respect a minimal generating set of oriented Reidemeister moves, such as the one introduced by Polyak \cite{P} and included in Figure~\ref{fig:ReidemeisterMoves}.

\begin{figure}[ht]
\includegraphics{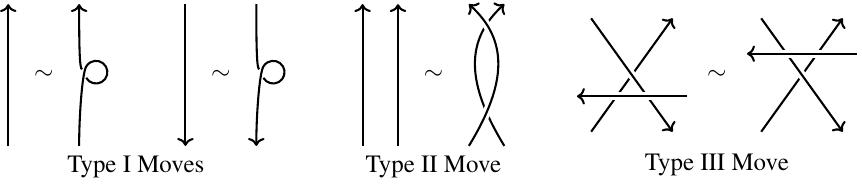}
\caption{A generating set for the oriented Reidemeister moves.}
\label{fig:ReidemeisterMoves}
\end{figure}
Additionally, if we let $L$ be an oriented link with an oriented diagram $D$. Suppose $D$ has $n$ crossings and $n$ arcs. We assign a label to each arc of $D$, and at each crossing, we use the coloring in Figure~\ref{fig:ColoringRule}.

\begin{figure}[ht]
\includegraphics{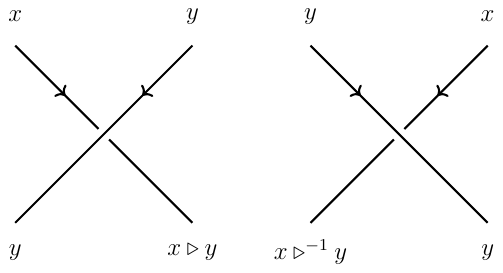}
\caption{Coloring rule at a positive and a negative crossing.}
    \label{fig:ColoringRule}
\end{figure}
\noindent The \emph{fundamental quandle of $L$}, denoted $Q(L)$, is defined as the quandle generated by the arc labels of $D$, modulo the relations given by the crossings. Because the quandle axioms perfectly correspond to the Reidemeister moves, the isomorphism class of the fundamental quandle is independent of the choice of diagram $D$. Therefore, the fundamental quandle is a well-defined link invariant.

Although it is known that the fundamental quandle of a knot is a powerful link invariant, it is difficult to compare two given fundamental quandles directly. Therefore, one way of extracting an effective and computable invariant is to count the homomorphisms from the fundamental quandle of a link to a target finite quandle $(X,\ast)$. The \emph{quandle counting invariant} is defined by
\[ \#Col_X(L) = \vert \text{Hom}(Q(L), X) \vert.\]
Furthermore, we will refer to the elements of $\operatorname{Hom}(Q(L),X)$ as \emph{$X$-colorings} of the link $L$.

We note that while the counting invariant is effective and easily computable, the individual identities of the homomorphisms and their underlying relationships are lost when only the cardinality of $\operatorname{Hom}(Q(L), X)$ for a finite quandle $X$ is considered. In order to address this, significant effort has been devoted to defining enhancements of the quandle counting invariant by introducing additional algebraic and combinatorial structures on $\text{Hom}(Q(L), X)$. One such enhancement that has gained considerable attention is the quiver enhancement introduced by Cho and Nelson \cite{CN}.
\begin{definition}
Let $X$ be a finite quandle, $L$ be a link, and $S \subset \text{End}(X)$. Then, the \emph{$X$-coloring quiver}, denoted $\mathcal{Q}_X^S(K)$, is a directed graph where the vertices are elements $f \in \text{Hom}(\mathcal{Q}(K), X)$ and there exists an edge from $f$ to $g$ if $\phi \circ f = g$ for some $\phi \in S$. In the case when $S = \operatorname{End}(X)$, the $X$-coloring quiver is called the \emph{full quandle coloring quiver} of $L$ with respect to $X$, denoted $\mathcal{Q}_X(L)$. Furthermore, in the case when $S=\{ \phi\}$ is a singleton, we will denote it by $\mathcal{Q}_X^\phi(L)$.
\end{definition}

We will follow the convention set up by Cho and Nelson \cite{CN}: we will use the same notation for a vertex in $\mathcal{Q}_X^S(L)$ and for an $X$-coloring of $L$, since vertices of $\mathcal{Q}_X^S(L)$ correspond precisely with the $X$-colorings of $L$. We will use $\textup{deg}^+(f)$ to denote the in-degree of the vertex $f$ in the quiver $\mathcal{Q}_X^S(L)$. Furthermore, Cho and Nelson derived a polynomial invariant from the $X$-coloring quiver via decategorification. We include the definition of this polynomial for completeness and will be used in Section~\ref{Examples}. 

\begin{definition}
Let $X$ be a finite quandle, $S \subset \operatorname{End}(X)$, and $L$ be an oriented link. Let $\mathcal{Q}_X^S (L)$ be the associated $X$-coloring quiver with vertex set $V(\mathcal{Q}_X^S(L))$. Then the \emph{in-degree quiver polynomial} of $L$ with respect to $X$ is 
\[\Phi_X^{\textup{deg}^+,S}(L) = \sum_{f \in V(\mathcal{Q}_X^S(L))}u^{\textup{deg}^+(f)}.\]    
If $S= \{ \phi \}$, then $\Phi_X^{\textup{deg}^+,S}(L)$ is denoted by $\Phi_X^{\textup{deg}^+,\phi}(L)$, and if $S = \operatorname{End}(X)$, then $\Phi_X^{\textup{deg}^+,S}(L)$ is denoted by $\Phi_X^{\textup{deg}^+}(L)$.
\end{definition}

\section{Subquandle Quiver Filtration} \label{Subquandlefiltration}

In \cite{KH}, the author introduced a quiver filtration based on a nested sequence of subsets of biquandle endomorphisms and proved that the induced filtration of biquandle coloring quivers is a link invariant. 

In this section, we introduce a novel quiver filtration induced by filtering the quandle itself by a nested sequence of subquandles rather than filtering the endomorphism set. We show that the sequence of subquandle counting invariants induced by this filtration is a link invariant, as is the induced quiver filtration. Lastly, we define two polynomial invariants of links via decategorification.

We will start by fixing the notation that will be used throughout this section. Let $L$ be an oriented link with diagram $D$ and fundamental quandle $Q(L)$. Let $X$ be a finite quandle and let $S \subseteq \operatorname{End}(X)$.

\begin{definition}
A \emph{subquandle filtration} of length $m$ is a sequence $U_\ast \coloneqq (U_i)_{i=0}^m$ of subquandles of $X$ satisfying
\[ U_0 \subseteq U_1 \subseteq \dots \subseteq U_m \subseteq X. \]
\end{definition}

For each $i \in \{0, \dots, m\}$, the subquandle $(U_i, \triangleright)$ of $(X,\triangleright)$ is a quandle. Since the fundamental link quandle $Q(L)$ is a link invariant up to isomorphism, the hom-set $\operatorname{Hom}(Q(L), U_i)$ is a link invariant up to bijection. It directly follows that $\#Col_{U_i}(L) = \vert \operatorname{Hom}(Q(L), U_i) \vert$ is an invariant of links and the sequence induced by the subquandle filtration, $U_\ast$,\[  \#Col_{U_\ast}(L) = (\#Col_{U_0}(L),\#Col_{U_1}(L), \dots, \#Col_{U_m}(L))\] is also a link invariant.

Additionally, for each subquandle $U_i \subseteq X$, we obtain a $U_i$-coloring quiver, denoted by $\mathcal{Q}_{U_i}^S(L)$. The vertices of this quiver are the $U_i$-colorings of $L$. We define the directed edges of $\mathcal{Q}_{U_i}^S(L)$ by restricting the edges of the $X$-coloring quiver $\mathcal{Q}_X^S(L)$: a directed edge labeled by $\alpha \in S$ exists from vertex $f$ to vertex $\alpha \circ f$ if and only if both $f$ and $\alpha \circ f$ are valid $U_i$-colorings of $L$. When this holds, we call the endomorphism $\alpha$ a \emph{respectful endomorphism at $f$}. Otherwise, we call it a \emph{disrespectful endomorphism at $f$}. 

For any endomorphism $\alpha \in S$ and any $U_i$-coloring $f \in \operatorname{Hom}(Q(L), U_i)$, the directed edge from $f$ to $\alpha \circ f$ in the quiver $\mathcal{Q}_{U_i}^S(L)$ is strictly determined by the action of $\alpha$ on $\operatorname{Hom}(Q(L), U_i)$. Therefore, each quiver $\mathcal{Q}_{U_i}^S(L)$ is completely determined up to quiver isomorphism by the fundamental quandle $Q(L)$, the subquandle $U_i$, and the set $S$. Since the subquandle filtration $U_\ast$ and the set $S\subseteq \operatorname{End}(X)$ are fixed, each quiver $\mathcal{Q}_{U_i}^S(L)$ is completely determined up to graph isomorphism by the fundamental quandle $Q(L)$ and hence is a link invariant.
Furthermore, since $U_i \subseteq U_{i+1}$, any valid $U_i$-coloring of $L$ is naturally a $U_{i+1}$-coloring. This implies that $V(\mathcal{Q}_{U_i}^S(L)) \subseteq V(\mathcal{Q}_{U_{i+1}}^S(L))$. It follows that if an edge exists between $f$ and $\alpha \circ f$ in $\mathcal{Q}_{U_i}^S(L)$, both endpoints are valid $U_{i+1}$-colorings, meaning the exact same edge exists in $\mathcal{Q}_{U_{i+1}}^S(L)$. Therefore, $\mathcal{Q}_{U_i}^S(L)$ is a subquiver of $\mathcal{Q}_{U_{i+1}}^S(L)$, which we denote by $\mathcal{Q}_{U_i}^S(L) \subseteq \mathcal{Q}_{U_{i+1}}^S(L)$.

\begin{definition}
Let $L$ be an oriented link, $X$ a finite quandle, $S \subset \operatorname{End}(X)$, and $U_\ast$ a subquandle filtration of $X$. The \emph{subquandle quiver filtration} of $L$ is a sequence $\mathcal{Q}_{U_\ast}^S(L) \coloneqq (\mathcal{Q}_{U_i}^S)_{i=0}^m$ of subquivers of $\mathcal{Q}_{X}^S(L)$ satisfying
\[ \mathcal{Q}_{U_0}^S(L) \subseteq \mathcal{Q}_{U_1}^S(L) \subseteq \dots \subseteq \mathcal{Q}_{U_m}^S(L). \]
If $S = \operatorname{End}(X)$, we omit the superscript and write $\mathcal{Q}_{U_\ast}(L)$.
\end{definition}

 As noted in \cite{KH}, while a filtration traditionally terminates at the ambient object, we relax this condition and define a filtration with the flexibility to terminate early. This flexibility allows us to build a nested subquandle sequence from a base subquandle and terminating the sequence at any intermediate level. Furthermore, because our aim is to distinguish links, these intermediate levels often capture enough information to distinguish knots and links without requiring the full ambient quandle. To prove the following results, we use a similar argument to that of \cite[Theorem 3.2.1]{KH}.

\begin{theorem}
The subquandle quiver filtration $\mathcal{Q}_{U_\ast}^S(L)$ is a link invariant.
\end{theorem}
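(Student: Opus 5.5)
The plan is to show that if $L$ and $L'$ are ambient isotopic (or, more generally, have isomorphic fundamental quandles), then there is a single quiver isomorphism $\Psi:\mathcal{Q}_X^S(L)\to\mathcal{Q}_X^S(L')$ that restricts, for every $i$, to an isomorphism $\mathcal{Q}_{U_i}^S(L)\to\mathcal{Q}_{U_i}^S(L')$. Levelwise invariance alone is not enough. The filtration is a nested sequence, so invariance should mean the isomorphisms are compatible with the inclusions $\mathcal{Q}_{U_i}^S\subseteq\mathcal{Q}_{U_{i+1}}^S$. The natural way to guarantee this is to build every level's isomorphism from the same underlying map.

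First, fix a quandle isomorphism $\theta: Q(L')\to Q(L)$, which exists by the invariance of the fundamental quandle recalled in Section~\ref{QuandlesandQuivers}. Define $\Psi(f)=f\circ\theta$ for $f\in\operatorname{Hom}(Q(L),X)$. This is a bijection $\operatorname{Hom}(Q(L),X)\to\operatorname{Hom}(Q(L'),X)$ with inverse $g\mapsto g\circ\theta^{-1}$. Next, check that $\Psi$ preserves edges with their labels. An edge $f\to\alpha\circ f$ labelled $\alpha\in S$ maps to $f\circ\theta\to(\alpha\circ f)\circ\theta=\alpha\circ(f\circ\theta)$ by associativity of composition, and the same computation applied to $\Psi^{-1}$ gives the converse. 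Hence $\Psi$ is an isomorphism of the ambient quivers $\mathcal{Q}_X^S(L)\cong\mathcal{Q}_X^S(L')$.

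Then show that $\Psi$ respects each level. A coloring $f$ has image in $U_i$ if and only if $f\circ\theta$ does, since $\theta$ is surjective. So $\Psi$ maps $V(\mathcal{Q}_{U_i}^S(L))$ bijectively onto $V(\mathcal{Q}_{U_i}^S(L'))$. Recall that edges of $\mathcal{Q}_{U_i}^S$ are exactly the ambient edges whose two endpoints are both $U_i$-colorings. Because $\Psi$ preserves both ambient edges and the $U_i$-vertex sets, it restricts to an isomorphism $\Psi_i:\mathcal{Q}_{U_i}^S(L)\to\mathcal{Q}_{U_i}^S(L')$. In particular, it sends respectful endomorphisms at $f$ to respectful endomorphisms at $\Psi(f)$, and likewise for disrespectful ones. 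Since every $\Psi_i$ is a restriction of the same $\Psi$, each square formed by the inclusion $\mathcal{Q}_{U_i}^S\subseteq\mathcal{Q}_{U_{i+1}}^S$ and the maps $\Psi_i,\Psi_{i+1}$ commutes. This gives an isomorphism of filtered quivers, so $\mathcal{Q}_{U_\ast}^S(L)$ is a link invariant.

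The main obstacle is not computational. It is stating precisely what invariance of a filtration means, and making sure that a single choice of $\theta$ is used at all levels so that the commuting squares are automatic. The one substantive verification is the equivalence ``$\operatorname{im} f\subseteq U_i \iff \operatorname{im}(f\circ\theta)\subseteq U_i$'', which uses the bijectivity of $\theta$. The rest follows the pattern of \cite[Theorem 3.2.1]{KH}.
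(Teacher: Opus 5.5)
Your proposal is correct and takes essentially the same approach as the paper: one fixed quandle isomorphism $Q(L)\cong Q(L')$ induces the quiver isomorphism at every level, and these commute with the inclusions $\mathcal{Q}_{U_i}^S\subseteq\mathcal{Q}_{U_{i+1}}^S$ because they all come from the same map. Your version just makes the details explicit. You use precomposition with $\theta$, you show that surjectivity of $\theta$ gives $\operatorname{Im}(f\circ\theta)=\operatorname{Im}(f)$, and you obtain each level's isomorphism by restricting a single ambient isomorphism of $\mathcal{Q}_X^S$.
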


\begin{proof}
If $L$ and $L'$ are isotopic links, there exists a quandle isomorphism $Q(L) \cong Q(L')$. For each filtration level $i$, this induces a bijection between $\operatorname{Hom}(Q(L), U_i)$ and $\operatorname{Hom}(Q(L'), U_{i})$  that preserves the quiver structure, yielding a quiver isomorphism $\phi_i \colon \mathcal{Q}_{U_i}^S(L) \rightarrow \mathcal{Q}_{U_i}^S(L')$. Furthermore, because the inclusions $\mathcal{Q}_{U_i}^S \subseteq \mathcal{Q}_{U_{i+1}}^S$ are induced by the subquandle inclusions $U_i \subseteq U_{i+1}$ and each $\phi_i$ is induced by the same quandle isomorphism, the sequence of isomorphisms $\phi_i$ naturally commutes with these inclusion maps. That is, the restriction of $\phi_{i+1}$ to the subquiver $\mathcal{Q}_{U_i}^S(L)$ is exactly $\phi_i$. Since both the quivers at every level and the nested inclusions between them are preserved, the entire subquandle quiver filtration $\mathcal{Q}_{U_\ast}^S(L)$ is a link invariant.
\end{proof}

\begin{corollary}\label{invariant}
Any invariant of directed graph filtrations evaluated on $\mathcal{Q}_{U_\ast}^S(L)$, or any directed graph invariant applied at each level $\mathcal{Q}_{U_i}^S(L)$, is an oriented link invariant.
\end{corollary}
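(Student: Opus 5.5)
The corollary follows from the preceding theorem by composing invariants, so the plan is to make that composition explicit. First I will fix what "invariant of directed graph filtrations" means. It is a function $F$ on nested sequences of quivers $G_0 \subseteq G_1 \subseteq \dots \subseteq G_m$ that takes equal values on any two sequences related by a \emph{filtered isomorphism}. A filtered isomorphism is a family of quiver isomorphisms $\phi_i \colon G_i \to G_i'$ such that $\phi_{i+1}$ restricted to $G_i$ equals $\phi_i$ for each $i$. Likewise, a directed graph invariant is a function $I$ on quivers that is constant on isomorphism classes.

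Next, let $L$ and $L'$ be isotopic oriented links. The proof of the preceding theorem supplies a quandle isomorphism $Q(L) \cong Q(L')$. Precomposing with it gives, for each $i$, a bijection
\[\operatorname{Hom}(Q(L),U_i) \to \operatorname{Hom}(Q(L'),U_i).\]
This bijection commutes with post-composition by every $\alpha \in S$, so it is a quiver isomorphism $\phi_i \colon \mathcal{Q}_{U_i}^S(L) \to \mathcal{Q}_{U_i}^S(L')$. The maps $\phi_i$ are compatible with the inclusions, because all of them are induced by the same quandle isomorphism and $U_i \subseteq U_{i+1}$.

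Hence $(\phi_i)_{i=0}^m$ is a filtered isomorphism $\mathcal{Q}_{U_\ast}^S(L) \to \mathcal{Q}_{U_\ast}^S(L')$. By definition of $F$, this gives $F(\mathcal{Q}_{U_\ast}^S(L)) = F(\mathcal{Q}_{U_\ast}^S(L'))$. For a levelwise invariant $I$, the single isomorphism $\phi_i$ already gives $I(\mathcal{Q}_{U_i}^S(L)) = I(\mathcal{Q}_{U_i}^S(L'))$ for each $i$. Therefore the tuple $(I(\mathcal{Q}_{U_i}^S(L)))_{i=0}^m$ is also an oriented link invariant.

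The only delicate point is choosing the right notion of isomorphism. If edges carry labels $\alpha \in S$, then "quiver isomorphism" must mean a label-preserving isomorphism. I will check that $\phi_i$ preserves labels: it sends the edge $f \to \alpha\circ f$ to the edge $f\circ\psi \to \alpha\circ f\circ\psi$, where $\psi$ is the induced isomorphism of fundamental quandles. This uses only associativity of composition, so labeled invariants are covered too. Everything else is formal, so I expect no real obstacle beyond stating these definitions carefully.
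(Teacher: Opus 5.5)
Your proposal is correct and follows essentially the same route as the paper. The paper's proof of the preceding theorem builds the level-wise quiver isomorphisms from $Q(L)\cong Q(L')$ and checks that they commute with the inclusions, after which the corollary is immediate by exactly the composition you describe. Your explicit definition of a filtered isomorphism and your check that precomposition preserves edge labels (and images, hence validity of $\alpha\circ f$ as a $U_i$-coloring) just spell out what the paper leaves implicit.
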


For the filtration level $i$, every vertex $f \in V(\mathcal{Q}_{U_i}^S(L))$ has up to $\vert S \vert$ potential edges coming out of the vertex. Based on whether the endomorphism maps the $U_i$-coloring to another valid $U_i$-coloring, $S$ partitions at the vertex $f$ into two disjoint subsets.
\begin{definition}
    Let $f \in V(\mathcal{Q}_{U_i}^S (L))$. The set of endomorphisms $S \subseteq \operatorname{End}(X)$ partitions into a \emph{respectful set} and a \emph{disrespectful set} relative to the vertex $f$, defined as:

    \[ S_{res}(f,U_i) = \{ \phi \in S \, \vert \, \phi \circ f \text{ is a valid $U_i$-coloring}\},\]
    \[ S_{dis}(f, U_i)  = \{ \phi \in S \, \vert \, \phi \circ f \text{ is not a valid $U_i$-coloring}\}.\]

\end{definition}

In the construction above of each $\mathcal{Q}_{U_i}^S$, we restrict our attention to respectful endomorphisms. Thus, we have that the out-degree of $f$ in the quiver $\mathcal{Q}_{U_i}^S(L)$ is exactly $\vert S_{res}(f, U_i) \vert$. Note that since for each vertex $f \in V(\mathcal{Q}_{U_i}^S(L))$ we have that $\vert S_{res}(f,U_i) \vert + \vert S_{dis}(f,U_i) \vert = \vert S \vert$ is constant, we can keep track of the disrespectful endomorphisms with a single variable polynomial.

\begin{definition}
Let $\mathcal{Q}_{U_\ast}^S(L)$ be a subquandle quiver filtration of $L$. The \emph{disrespectful polynomial} of $\mathcal{Q}_{U_i}^S(L)$ at filtration level $i$ is defined as
    \[ \mathbf{dis}_{U_i}^S(L)(t) = \sum_{f \in V(\mathcal{Q}_{U_i}^S(L))} t^{\vert S_{dis}(f,U_i)\vert}.\]
Thus, for the subquandle quiver filtration $U_\ast$, we obtain a sequence of disrespectful polynomials: 
\[\mathbf{dis}_{U_\ast}^S(L)(t) = \left( \mathbf{dis}_{U_1}^S(L)(t), \mathbf{dis}_{U_2}^S(L)(t), \dots, \mathbf{dis}_{U_m}^S(L)(t)\right).\]
\end{definition}

\begin{corollary}
For each $i$, the disrespectful polynomial $\mathbf{dis}_{U_i}^{S}(L)(t)$, as well as the sequence of disrespectful polynomials $\mathbf{dis}_{U_\ast}^S(L)$, are link invariants.
\end{corollary}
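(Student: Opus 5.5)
The plan is to reduce the statement to Corollary~\ref{invariant}. For that we must check that $\mathbf{dis}_{U_i}^S(L)(t)$ depends only on the isomorphism class of the quiver $\mathcal{Q}_{U_i}^S(L)$, together with the fixed data $S$, $U_i$, $X$. Once this holds, invariance of each polynomial follows from invariance of the quiver filtration. Invariance of the sequence then follows componentwise, since the filtration $U_\ast$ and the set $S$ are fixed.

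The key observation is the count noted just before the definition. For each vertex $f$ of $\mathcal{Q}_{U_i}^S(L)$, the out-degree of $f$ in that quiver counts the respectful endomorphisms, so
\[ |S_{dis}(f,U_i)| = |S| - |S_{res}(f,U_i)| = |S| - \deg^-(f). \]
Hence
\[ \mathbf{dis}_{U_i}^S(L)(t) = \sum_{f} t^{|S| - \deg^-(f)}. \]
This is the out-degree generating function of the quiver after the substitution $u \mapsto t^{-1}$ and multiplication by $t^{|S|}$. Out-degree distributions are preserved under quiver isomorphism, and $|S|$ is fixed, so Corollary~\ref{invariant} applies.

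The one point needing care is the phrase ``out-degree equals $|S_{res}|$''. It requires the edges to be counted with multiplicity, one edge per $\alpha \in S$, labelled by $\alpha$. Otherwise two distinct endomorphisms with $\alpha\circ f=\beta\circ f$ would collapse into a single edge.

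I would sidestep this dependence on edge conventions by arguing directly from the isomorphism $Q(L)\cong Q(L')$, as in the theorem's proof. Such an isomorphism $\psi$ induces a bijection
\[ f \mapsto f\circ\psi^{-1} \]
from $\operatorname{Hom}(Q(L),U_i)$ to $\operatorname{Hom}(Q(L'),U_i)$. The key identity is that
\[ \phi\circ(f\circ\psi^{-1}) = (\phi\circ f)\circ\psi^{-1}. \]
The right-hand side takes values in $U_i$ exactly when $\phi\circ f$ does, because $\psi^{-1}$ is surjective. Thus the bijection satisfies $S_{dis}(f,U_i)=S_{dis}(f\circ\psi^{-1},U_i)$ as sets. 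The two sums therefore agree term by term, giving $\mathbf{dis}_{U_i}^S(L)=\mathbf{dis}_{U_i}^S(L')$ for every $i$, and hence equality of the sequences.
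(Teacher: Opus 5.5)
Your proof is correct, but the argument you finally commit to differs from the paper's. The paper's proof is just the one-line reduction in your first two paragraphs. Since the out-degree of $f$ in $\mathcal{Q}_{U_i}^S(L)$ is $|S_{res}(f,U_i)|$ and $|S|$ is fixed, $\mathbf{dis}_{U_i}^S(L)(t)$ is a directed-graph invariant of the level-$i$ quiver, so Corollary~\ref{invariant} applies. Your worry about multiplicities is settled by the paper's own convention in Section~\ref{Subquandlefiltration}: there is one edge labeled $\alpha$ for each respectful $\alpha\in S$, and the paper states explicitly that the out-degree equals $|S_{res}(f,U_i)|$.

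Your direct argument instead returns to the quandle isomorphism $\psi$. It shows that $f\mapsto f\circ\psi^{-1}$ sends $S_{dis}(f,U_i)$ to $S_{dis}(f\circ\psi^{-1},U_i)$ as the same subset of $S$, using $\operatorname{Im}(\phi\circ f\circ\psi^{-1})=\operatorname{Im}(\phi\circ f)$. This is essentially the mechanism inside the proof of the paper's Theorem, written out in place.

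The paper's route is shorter and fits the general principle of Corollary~\ref{invariant}. Yours does not depend on how edges are counted, and it proves something stronger, because it transports the endomorphism labels and the image sets $\operatorname{Im}(\phi\circ f)$ themselves. In particular, the same argument proves invariance of the graded disrespectful polynomial word for word. Its defect weights depend on the actual sets $\operatorname{Im}(\phi\circ f)\cap B_j$, which are not obviously recoverable from the abstract quiver filtration alone.
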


\begin{proof}
    This follows immediately from Corollary~\ref{invariant}.
\end{proof}

We will further extract more information by not just using the subquandle structure, but its specific position within the filtration.  

\begin{definition}
Let $f \in \operatorname{Hom}(Q(L), X)$ be an $X$-coloring of $L$. For an endomorphism $\phi \in \operatorname{End}(X)$,  the \emph{target index} of $\phi$ relative to $f$, denoted $\operatorname{idx}(\phi, f)$, is defined as
\[
\operatorname{idx}(\phi, f) = \begin{cases} 
\min \{ j \in \{0, \dots, m\} \mid \operatorname{Im}(\phi \circ f) \subseteq U_j \} & \text{if } \operatorname{Im}(\phi \circ f) \subseteq U_m, \\ 
\infty & \text{otherwise.} 
\end{cases}
\]
\end{definition}

Note that the target index of $\phi$ relative to $f$ measures the minimal filtration level $j$ such that $\operatorname{Im}(\phi \circ f) \subseteq U_j$. Additionally, to simplify notation, we introduce the following two index sets: 
\[ M = \{0, 1, \dots , m \} \quad \text{and} \quad \bar{M} = M \cup \{\infty\}. \]

\begin{definition}\label{Disrespectfullayers}
Given a subquandle filtration $U_\ast$ of $X$, we partition the quandle $X$ into layers $B_j \subseteq X$ for each $j \in \bar{M}$:
\[
B_j = 
\begin{cases} 
U_0 & \text{if } j=0, \\
U_j \setminus U_{j-1} & \text{if } j \in \{1, \dots, m\}, \\ 
X \setminus U_m & \text{if } j = \infty. 
\end{cases}
\]
Relative to a fixed filtration level $i \in M$, we say $B_j$ is a \emph{respectful layer} if $j \leq i$, and a \emph{disrespectful layer} if $j > i$ or $j = \infty$.
\end{definition}

\begin{definition}
Let $f \in \operatorname{Hom}(Q(L), X)$, and let $\phi \in S \subseteq \operatorname{End}(X)$ with $\operatorname{idx}(\phi, f) = j$. The \emph{defect weight} of $\phi$ at $f$, denoted $\delta(\phi, f) \in \mathbb{N}$, is defined as 
\[
\delta(\phi, f) = \left\vert{} \operatorname{Im}(\phi \circ f) \cap B_j \right\vert{}.
\]
\end{definition}

We note that because $\operatorname{idx}(\phi,f) = j$, $U_j$ is by definition the smallest subquandle in the filtration containing $\operatorname{Im}(\phi \circ f)$. Thus, the intersection $\operatorname{Im}(\phi \circ f) \cap B_j$ must be nonempty, so we have $\delta(\phi,f) \geq 1$.

\begin{definition}\label{GradedPoly}
Let $\mathcal{Q}_{U_i}^S(L)$ be the induced subquandle quiver, and let $V(\mathcal{Q}_{U_i}^S(L))$ denote its set of vertices (corresponding to the valid $U_i$-colorings $f$ of $L$). For each vertex $f$, we partition the set $S \subseteq \operatorname{End}(X)$ into subsets:
\[
S_{j, k}(f) = \{ \phi \in S \mid \operatorname{idx}(\phi, f) = j \text{ and } \delta(\phi, f) = k \}.
\]

The \emph{graded disrespectful polynomial} of $L$ relative to $U_i$ and $S$ is a multivariable polynomial defined by 
\[ 
G\mathbf{dis}_{U_i}^S(L) = \sum_{f \in V(\mathcal{Q}_{U_i}^S(L))} \left( \prod_{(j,k) \in \bar{M} \times \mathbb{N}} t_{j,k}^{\vert S_{j,k}(f)\vert} \right).
\]
Thus, for the full subquandle filtration $U_\ast$, we obtain a sequence of graded disrespectful polynomials:
\[ 
G\mathbf{dis}_{U_\ast}^S(L) = ( G\mathbf{dis}_{U_0}^S(L), \dots, G\mathbf{dis}_{U_m}^S(L)).
\]
\end{definition}

\begin{corollary}
For each $i$, the graded disrespectful polynomial $G\mathbf{dis}_{U_i}^{S}(L)(t)$, as well as the sequence of disrespectful polynomials $G\mathbf{dis}_{U_\ast}^S(L)$, are link invariants.
\end{corollary}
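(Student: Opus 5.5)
The plan is to show directly that $G\mathbf{dis}_{U_i}^S(L)$ depends only on the isomorphism class of $Q(L)$, mirroring the proof that the subquandle quiver filtration is a link invariant. Corollary~\ref{invariant} does not apply verbatim. The exponents $\vert S_{j,k}(f)\vert$ use data not recorded in the quiver $\mathcal{Q}_{U_i}^S(L)$: the images $\operatorname{Im}(\phi\circ f)$ for disrespectful $\phi$, which may leave $U_i$ or even $U_m$. So I will work with the colorings themselves rather than with an abstract graph invariant.

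First, let $L$ and $L'$ be isotopic, and fix a quandle isomorphism $\psi\colon Q(L')\to Q(L)$. Precomposition $f\mapsto f\circ\psi$ is a bijection $\operatorname{Hom}(Q(L),X)\to\operatorname{Hom}(Q(L'),X)$. Because $\psi$ is surjective, $\operatorname{Im}(f\circ\psi)=\operatorname{Im}(f)$. In particular, the bijection restricts to a bijection $V(\mathcal{Q}_{U_i}^S(L))\to V(\mathcal{Q}_{U_i}^S(L'))$ for every $i$; this is the same bijection $\phi_i$ used in the proof of the theorem above.

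The key step is the identity
\[
\operatorname{Im}(\phi\circ f\circ\psi)=\operatorname{Im}(\phi\circ f)\quad\text{for all }\phi\in S,
\]
which again follows from surjectivity of $\psi$. The target index $\operatorname{idx}(\phi,f)$ is defined only in terms of $\operatorname{Im}(\phi\circ f)$ and the fixed filtration $U_\ast$. The defect weight $\delta(\phi,f)$ is defined only in terms of $\operatorname{Im}(\phi\circ f)$ and the fixed layers $B_j$ of Definition~\ref{Disrespectfullayers}. Hence
\[
\operatorname{idx}(\phi,f\circ\psi)=\operatorname{idx}(\phi,f),\qquad \delta(\phi,f\circ\psi)=\delta(\phi,f).
\]
It follows that $S_{j,k}(f\circ\psi)=S_{j,k}(f)$ as subsets of $S$, for every $(j,k)\in\bar M\times\mathbb{N}$.

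Next, I reindex the sum defining $G\mathbf{dis}_{U_i}^S(L')$ along the bijection of vertices. Each monomial $\prod t_{j,k}^{\vert S_{j,k}(f)\vert}$ matches the corresponding monomial for $f\circ\psi$, so $G\mathbf{dis}_{U_i}^S(L')=G\mathbf{dis}_{U_i}^S(L)$. This holds for each $i$, so the sequence is invariant as well. I also note that each product is a genuine monomial: only finitely many $(j,k)$ have $S_{j,k}(f)\neq\emptyset$, since $S$ is finite and $k\le\vert X\vert$.

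The main point requiring care is the one flagged above, namely that disrespectful endomorphisms are not visible in the quiver. The remedy is to observe that invariance comes from transport along $\psi$ at the level of colorings, and that images are preserved because $\psi$ is onto. Independence of the choice of $\psi$ is not needed, since we only compare the two polynomial values.
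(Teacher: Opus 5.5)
Your proof is correct, but it is not the paper's route. The paper's entire proof is ``this follows immediately from Corollary~\ref{invariant}'', i.e.\ from the claim that any directed-graph invariant of the levels $\mathcal{Q}_{U_i}^S(L)$ is a link invariant. You instead transport colorings along a quandle isomorphism $\psi\colon Q(L')\to Q(L)$. Surjectivity of $\psi$ gives $\operatorname{Im}(\phi\circ f\circ\psi)=\operatorname{Im}(\phi\circ f)$, so target indices, defect weights and the sets $S_{j,k}(f)$ match term by term.

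Your objection to the citation is well founded. For the ungraded polynomial the paper's reasoning does work, since $\vert S_{dis}(f,U_i)\vert$ is $\vert S\vert$ minus the out-degree of $f$ in $\mathcal{Q}_{U_i}^S(L)$. The exponents $\vert S_{j,k}(f)\vert$, however, depend on the actual images $\operatorname{Im}(\phi\circ f)$. This includes endomorphisms whose edges leave $U_i$ (or $U_m$) and are therefore absent from the quiver.

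Concretely, take Example~\ref{Ex:TrefoilQuiver} with the same filtration but $S=\{\mathrm{id}_X\}$. Every level is then a disjoint union of vertices each carrying one loop, so the eight vertices of $\mathcal{Q}_{U_1}^S(3_1)$ outside $\mathcal{Q}_{U_0}^S(3_1)$ are indistinguishable as graph data. Yet $G\mathbf{dis}_{U_1}^S(3_1)=t_{0,1}+2t_{1,1}+6t_{1,2}$: the constant colorings $(1,1,1),(2,2,2)$ have defect weight $1$, while the six colorings with image $\{0,1,2\}$ have defect weight $2$.

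So the paper's one-liner only goes through if Corollary~\ref{invariant} is read for quivers whose vertices remember the colorings. That reading is legitimate because the isomorphisms in the theorem's proof are induced by precomposition with the quandle isomorphism, which is exactly the fact your argument isolates and verifies. The paper's route buys brevity and uniformity with the other corollaries. Yours is self-contained and names the one property actually needed, namely that images are preserved under precomposition with an isomorphism. It also makes clear that the quiver's edges play no role in $G\mathbf{dis}$; only its vertex set and the images $\operatorname{Im}(\phi\circ f)$ matter.
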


\begin{proof}
This follows immediately from Corollary~\ref{invariant}.
\end{proof}

\begin{remark}\label{gradedtoungraded}
Given the graded disrespectful polynomial at level $i$, we can recover the disrespectful polynomial by applying a simple variable substitution that collapses all defect weights and maps respectful layers to $1$. Specifically, we evaluate $G\mathbf{dis}_{U_i}^S(L)$ under the substitution:
\[
  t_{j,k} \mapsto  
  \begin{cases}
  t & \text{if } j > i \text{ or } j = \infty, \\
  1 & \text{if } j \leq i.
  \end{cases}
\]
\end{remark}

\begin{example}\label{Ex:TrefoilQuiver}
Let $X$ be the quandle defined by the operation table
\[
\begin{array}{c|cccc} 
     \ast & 0 & 1 & 2 & 3 \\
     \hline
     0 & 0 & 2 & 1 & 0  \\
     1 & 2 & 1 & 0 & 1  \\
     2 & 1 & 0 & 2 & 2  \\
     3 & 3 & 3 & 3 & 3  \\
\end{array}.
\]
Let $S = \{(0, 0, 0, 0), (1, 1, 1, 1), (2, 2, 2, 2), (3, 3, 3, 3), (0, 1, 2, 3)\} \subseteq \operatorname{End}(X)$. Let $D$ be an oriented diagram of $3_1$, see Figure ~\ref{fig:trefoil}.

\begin{figure}[ht]
    \includegraphics{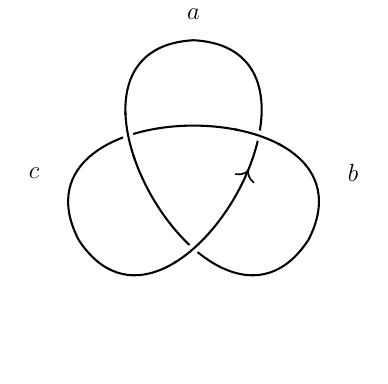}
    \caption{An oriented diagram $D$ of $3_1$.}
    \label{fig:trefoil}
\end{figure}
From the diagram $D$ we obtain the following presentation of the fundamental quandle of $3_1$,
\[
Q(3_1) \cong \langle a, b, c \mid b \triangleright^{-1} a = c, a \triangleright^{-1} c = b, c \triangleright^{-1} b = a \rangle.
\]
Using this presentation of the fundamental quandle of $3_1$, we can compute the $X$-colorings of $3_1$. We represent the elements of  $\operatorname{Hom}(Q(3_1),X)$ with 3-tuples $(f(a), f(b), f(c))$. Thus,
\[
\operatorname{Hom}(Q(3_1), X) = \left\{
\begin{aligned}
&(0, 0, 0), (0, 1, 2), (0, 2, 1), (1, 0, 2), (1, 1, 1), \\
&(1, 2, 0), (2, 0, 1), (2, 1, 0), (2, 2, 2), (3, 3, 3)
\end{aligned}
\right\}.
\]
Hence the quandle counting invariant is $\#\operatorname{Col}_X(3_1)= \vert \operatorname{Hom}(Q(3_1),X)\vert = 10$. 

Let $U_* = (U_i)_{i = 0, 1, 2}$ be a subquandle filtration of $X$, where 
    \begin{align*}
    U_0 &= \{0\},  \\ U_1 &= U_0 \cup \{1,2\}, \\ U_2 &= U_1 \cup \{3\}.
    \end{align*}
We will compute the target index, the defect weight, and the corresponding contribution of $f = (0,1,2), \ \phi = (3,3,3,3)$ to the graded disrespectful polynomial. Note that since $U_0$ only considers colorings that contain $0$, $f$ can only be considered a valid $U_1$ and $U_2$-coloring. For this computation suppose that the filtration level is $i=1$. Observe that $\phi \circ f = (3,3,3)$, so by the first quandle axiom, $\operatorname{Im}(\phi \circ\ f) = \{3\}$. Since $\operatorname{Im}(\phi \circ\ f) \nsubseteq U_0, U_1$ and $\operatorname{Im}(\phi \circ\ f) \subseteq U_2$, $\operatorname{idx}(\phi, f) = 2$. Furthermore, since $B_2 = U_2 \setminus U_1 = \{3\}$, the defect weight of $\phi$ at $f$ is $\delta(\phi, f)= |\operatorname{Im}(\phi \circ f) \cap B_2| = 1$. It follows that $\phi \in S_{2,1}$ relative to the vertex $f$, meaning that $\phi$ at the vertex $f$ contributes the term $t_{2,1}^1$ to the graded disrespectful polynomial $G\mathbf{dis}_{U_1}^S(3_1)$.

Repeating this process for all endomorphisms $\phi \in S \subset \operatorname{End}(X)$ at $f$ gives us $t_{0,1}^{1}t_{1,1}^{2}t_{2,1}^{1}t_{1,2}^{1}$. Repeating this for all endomorphisms $\phi \in S$ and all $f \in \operatorname{Hom}(Q(3_1), U_i)$ for each filtration level $i \in \{0,1,2\}$ gives us the following sequence of graded disrespectful polynomials, see Table~\ref{tab:Gdistabletref}.

\begin{table}[ht]
\centering 
\caption{Sequence of graded disrespectful polynomials for knot $3_1$.}
\label{tab:Gdistabletref}
\renewcommand{\arraystretch}{1.8}
\begin{tabular}{c|l}
Filtration Level & Graded Disrespectful Polynomial, $G\mathbf{dis}_{U_i}^S(3_1)$  \\ \hline
0            
&
$1t_{0,1}^{2}t_{1,1}^{2}t_{2,1}^{1}$                               \\ \hline
1          & $1t_{0,1}^{2}t_{1,1}^{2}t_{2,1}^{1} + 6t_{0,1}^{1}t_{1,1}^{2}t_{2,1}^{1}t_{1,2}^{1} + 2t_{0,1}^{1}t_{1,1}^{3}t_{2,1}^{1}$ 
                  \\ \hline
2          & $1t_{0,1}^{2}t_{1,1}^{2}t_{2,1}^{1} + 6t_{0,1}^{1}t_{1,1}^{2}t_{2,1}^{1}t_{1,2}^{1} + 2t_{0,1}^{1}t_{1,1}^{3}t_{2,1}^{1} + 1t_{0,1}^{1}t_{1,1}^{2}t_{2,1}^{2}$

\end{tabular}
\renewcommand{\arraystretch}{1}
\end{table}

Furthermore we collect the disrespectful polynomials derived from the graded polynomials using the method described in Remark~\ref{gradedtoungraded} in Table \ref{tab:distabletref}.

\begin{table}[ht]
\centering 
\caption{Sequence of disrespectful polynomials for knot $3_1$.}
\label{tab:distabletref}
\renewcommand{\arraystretch}{1.8}
\begin{tabular}{c|l}
Filtration Level & Disrespectful Polynomial, $\mathbf{dis}_{U_i}^S(3_1)$  \\ \hline
0            
&
$1t^3$                               \\ \hline
1          & $9t^1$ 
                  \\ \hline
2          & $10t^0$

\end{tabular}
\renewcommand{\arraystretch}{1}
\end{table}

Finally, Figure \ref{fig:quivers} shows the graphical quivers $\mathcal{Q}_{U_i}^S(3_1)$ for $i\in \{0,1,2\}$, with black edges representing respectful endomorphisms and red edges representing disrespectful endomorphisms at each vertex.

\begin{figure}[ht]
    \centering
    \begin{subfigure}[b]{.39\textwidth}
        \centering
        \includegraphics[width=\textwidth]{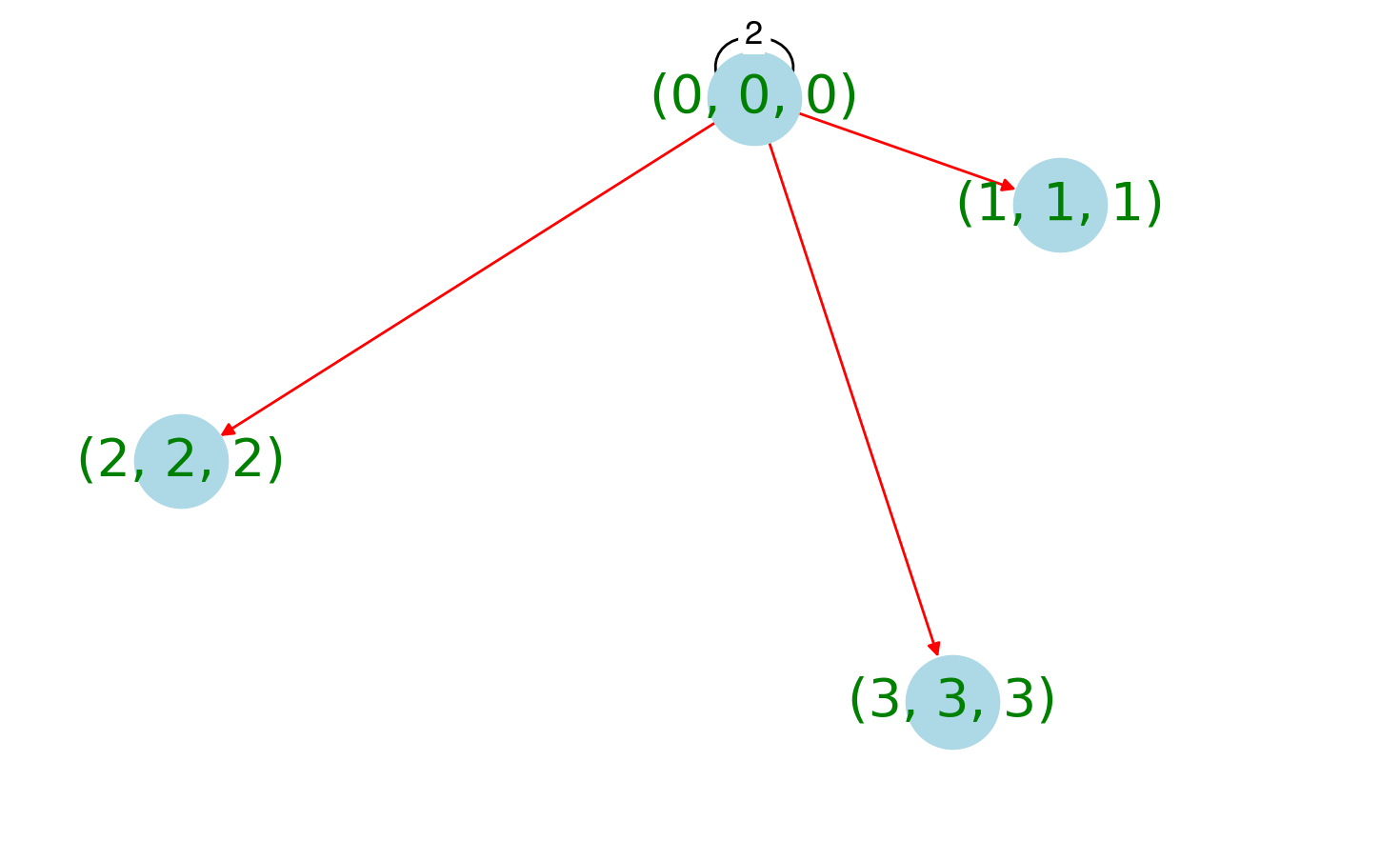}
        \caption{Subquiver $\mathcal{Q}_{U_0}^S(3_1)$}
        \label{fig:310}
    \end{subfigure}
    \hfill
    \begin{subfigure}[b]{.39\textwidth}
        \centering
        \includegraphics[width=\textwidth]{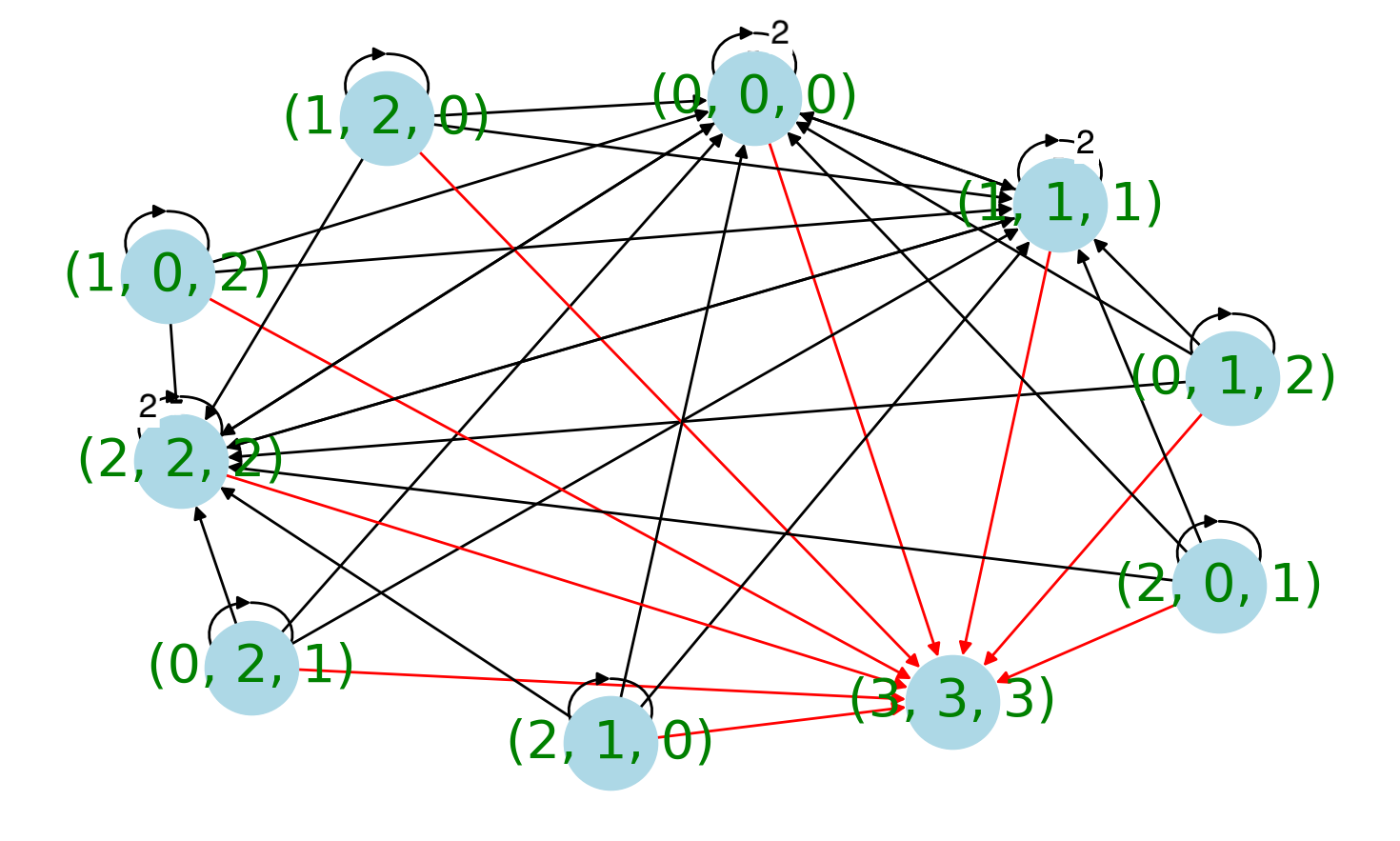}
        \caption{Subquiver $\mathcal{Q}_{U_1}^S(3_1)$}
        \label{fig:311}
    \end{subfigure}
    \hfill
    \begin{subfigure}[b]{.39\textwidth}
        \centering
        \includegraphics[width=\textwidth]{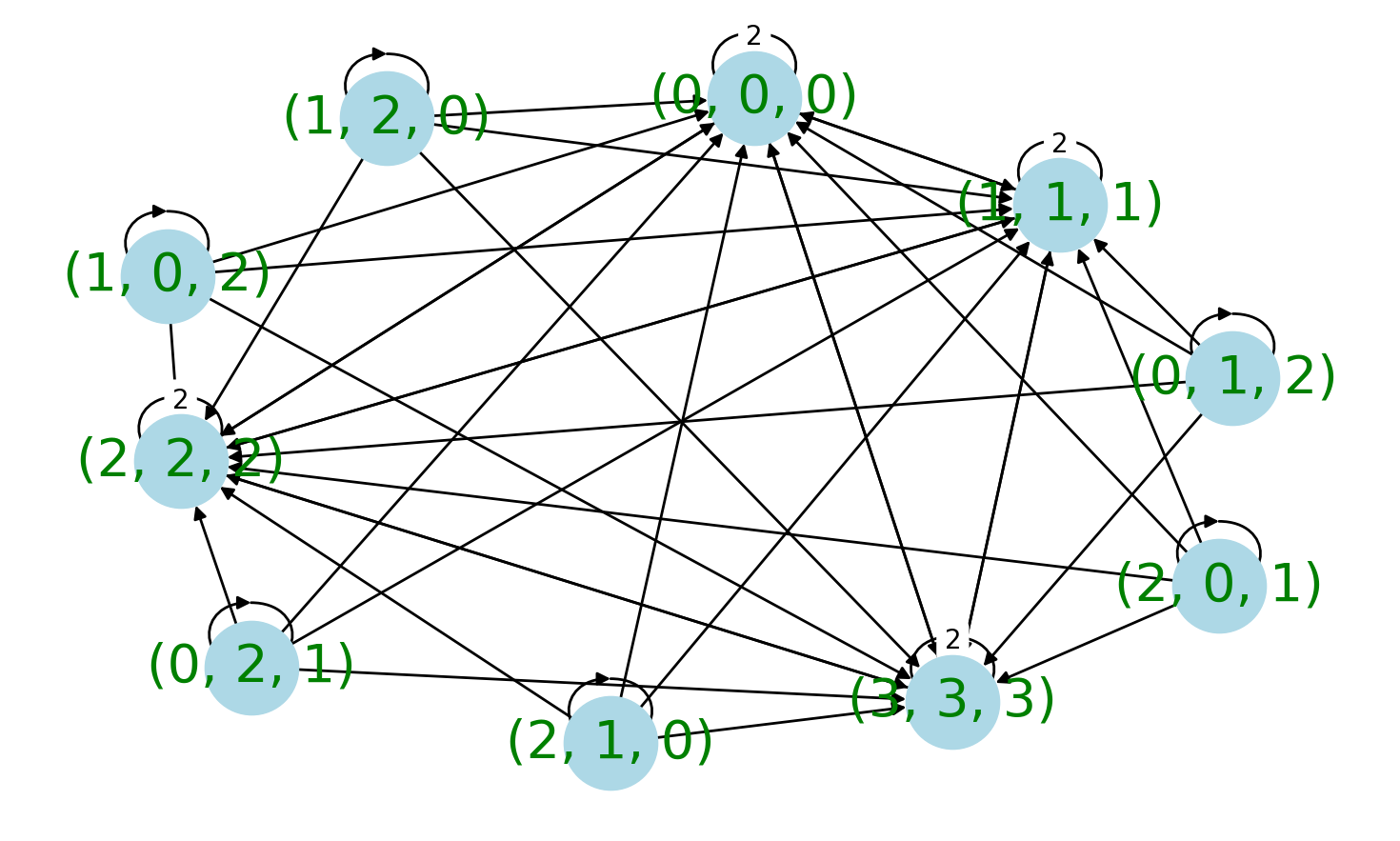}
        \caption{Subquiver $\mathcal{Q}_{U_2}^S(3_1)$}
        \label{fig:312}
    \end{subfigure}
    \caption{Quivers of $3_1$ with respect to the subquandle filtration $U_*$}
    \label{fig:quivers}
\end{figure}

\end{example}

\section{Examples}\label{Examples}

In this section, we provide computations of the various link invariants introduced in this article. We specifically compare our invariants to other well-known link invariants to illustrate their finer resolution in distinguishing links. Furthermore, we include an example demonstrating that the graded disrespectful polynomial is an enhancement of the disrespectful polynomial. Diagrams of the knots and links used in this section can be found in \cite{KA}.

\begin{example}
   Let $(X,\triangleright)$ be the following finite quandle with operation defined by the following table:
        \[
    \begin{array}{c|cccccccccc}
        \triangleright & 0 & 1 & 2 & 3 & 4 & 5 & 6 & 7 & 8 & 9\\
        \hline
        0 & 0 & 9 & 0 & 6 & 7 & 8 & 9 & 8 & 7 & 6 \\
        1 & 5 & 1 & 3 & 5 & 2 & 4 & 3 & 1 & 4 & 2 \\
        2 & 2 & 5 & 2 & 4 & 1 & 3 & 5 & 3 & 1 & 4 \\
        3 & 4 & 4 & 1 & 3 & 5 & 2 & 2 & 5 & 3 & 1 \\
        4 & 1 & 3 & 5 & 2 & 4 & 1 & 4 & 2 & 5 & 3 \\
        5 & 3 & 2 & 4 & 1 & 3 & 5 & 1 & 4 & 2 & 5 \\
        6 & 7 & 8 & 9 & 0 & 6 & 7 & 6 & 0 & 9 & 8 \\
        7 & 9 & 7 & 8 & 9 & 0 & 6 & 8 & 7 & 6 & 0 \\
        8 & 6 & 6 & 7 & 8 & 9 & 0 & 0 & 9 & 8 & 7 \\
        9 & 8 & 0 & 6 & 7 & 8 & 9 & 7 & 6 & 0 & 9 \\
    \end{array}.
    \]

    Let $U_* = (U_i)_{i = 0, 1, 2}$ be a subquandle filtration of $X$, where 
    \begin{align*}
    U_0 &= \{0\},  \\ U_1 &= U_0 \cup \{6, 7, 8, 9\}, \\ U_2 &= U_1 \cup \{1, 2, 3, 4, 5\}.
    \end{align*}
    For a choice of orientation on the knots $5_1$ and $6_1$ we have $\#\operatorname{Col}_X(5_1)  = 30= \#\operatorname{Col}_X(6_1)$. However, when computing the sequence of coloring invariants, we find the following:
    \begin{align*}
        \#\operatorname{Col}_{U_\ast}(5_1) &= (1, 25, 30), \\
        \#\operatorname{Col}_{U_\ast}(6_1) &= (1, 5, 30).
    \end{align*}
    Because $\#\operatorname{Col}_{U_\ast}(5_1) \neq \#\operatorname{Col}_{U_\ast}(6_1)$, we are able to successfully distinguish between the $5_1$ and $6_1$ knots with the sequence of counting invariants.
\end{example}

\begin{example}
    Let $(X,\triangleright)$ be the following finite quandle with operation defined by the following table:
    $$
    \begin{array}{c|cccccccccccc}
    \triangleright & 0 & 1 & 2 & 3 & 4 & 5 & 6 & 7 & 8 & 9 & 10 & 11 \\
    \hline
    0 & 0 & 0 & 0 & 0 & 0 & 0 & 0 & 0 & 0 & 0 & 0 & 0 \\
    1 & 1 & 1 & 1 & 10 & 5 & 6 & 10 & 6 & 5 & 10 & 5 & 6 \\
    2 & 2 & 2 & 2 & 7 & 9 & 4 & 7 & 4 & 9 & 7 & 9 & 4 \\
    3 & 3 & 11 & 8 & 3 & 8 & 11 & 11 & 8 & 3 & 8 & 11 & 3 \\
    4 & 4 & 9 & 7 & 9 & 4 & 7 & 9 & 4 & 7 & 2 & 2 & 2 \\
    5 & 5 & 10 & 6 & 6 & 10 & 5 & 1 & 1 & 1 & 5 & 6 & 10 \\
    6 & 6 & 5 & 10 & 5 & 6 & 10 & 6 & 5 & 10 & 1 & 1 & 1 \\
    7 & 7 & 4 & 9 & 2 & 2 & 2 & 9 & 7 & 4 & 4 & 7 & 9 \\
    8 & 8 & 3 & 11 & 8 & 11 & 3 & 3 & 11 & 8 & 11 & 3 & 8 \\
    9 & 9 & 7 & 4 & 4 & 7 & 9 & 2 & 2 & 2 & 9 & 4 & 7 \\
    10 & 10 & 6 & 5 & 1 & 1 & 1 & 5 & 10 & 6 & 6 & 10 & 5 \\
    11 & 11 & 8 & 3 & 11 & 3 & 8 & 8 & 3 & 11 & 3 & 8 & 11 \\
    \end{array}.
    $$

    Let $U_* = \{U_i\}_{i = 0, 1, 2, 3}$ be a subquandle filtration of $X$, where 
    \begin{align*}
    U_0 &= \{0\},  \\ U_1 &= U_0 \cup \{2, 4, 7, 9\}, \\ U_2 &= U_1 \cup \{3, 8, 11\}, \\
    U_3 &= U_2 \cup \{1, 5, 6, 10\}.
    \end{align*}

    Let $S \subseteq \text{End}(X)$ such that
    \[
    S = \left\{ 
    \begin{array}{cc}
    (11, 0, 0, 0, 0, 0, 0, 0, 0, 0, 0, 0), & (6, 6, 4, 6, 4, 6, 6, 4, 6, 4, 6, 6), \\ (10, 7, 0, 7, 0, 7, 7, 0, 7, 0, 7, 7), &
    (0, 0, 3, 11, 3, 0, 0, 3, 11, 3, 0, 11)
    \end{array}\right\}.
    \]
    For a choice of orientation on $L7a5$ and $L7n1$ we have 
    \[\#\operatorname{Col}_X(L7a5)  = 72= \#\operatorname{Col}_X(L7n1).\] Additionally, the coloring sequences with respect to the subquandle filtration fail to distinguish the two links as 
    \[\#\operatorname{Col}_{U_\ast}(L7a5) = (1, 25, 40, 72) =\#\operatorname{Col}_{U_\ast}(L7n1).\] 
    When we compute the sequence of disrespectful polynomials, we obtain the following: 
    \begin{align*}
        \mathbf{dis}^S_{U_\ast}(L7a5)(t) &= (1t^{3}, 1t^{3} + 8t^{4} + 16t^{1}, 15t^{2} + 16t^{0} + 9t^{1}, 72t^{0}), \\
        \mathbf{dis}^S_{U_\ast}(L7n1)(t) &= (1t^{3}, 1t^{3} + 20t^{4} + 4t^{1}, 27t^{2} + 4t^{0} + 9t^{1}, 72t^{0}).
    \end{align*}
    Because $\mathbf{dis}^S_{U_1}(L7a5)(t) \neq \mathbf{dis}^S_{U_1}(L7n1)(t)$ and $\mathbf{dis}^S_{U_2}(L7a5)(t) \neq \mathbf{dis}^S_{U_2}(L7n1)(t)$, the sequences of disrespectful polynomials are different, and thus we are able to distinguish between $L7a5$ and $L7n1$ by their sequence of disrespectful polynomials.
\end{example}

\begin{example}
    Let $(X,\triangleright)$ be the following finite quandle with operation defined by the following table:
    \[
    \begin{array}{c|cccccc}
    \triangleright & 0 & 1 & 2 & 3 & 4 & 5\\
    \hline
    0 & 0 & 2 & 1 & 0 & 2 & 1 \\
    1 & 2 & 1 & 0 & 1 & 0 & 2 \\
    2 & 1 & 0 & 2 & 2 & 1 & 0 \\
    3 & 3 & 3 & 3 & 3 & 3 & 3 \\
    4 & 5 & 5 & 5 & 4 & 4 & 4 \\
    5 & 4 & 4 & 4 & 5 & 5 & 5 \\

    \end{array}.
    \]
    Let $S \subseteq \text{End}(X)$ be a subset of five endomorphisms, where
    \[
    S = \left\{
    \begin{array}{cc}
(4, 4, 4, 5, 5, 5), (3, 3, 3, 4, 5, 5), (3, 3, 3, 3, 3, 3), (3, 3, 3, 3, 1, 1), (4, 4, 4, 3, 5, 5)

    \end{array}
    \right\}.
    \]
    Let $U_* = \{U\}_{i = 0,1,2}$ be a subquandle filtration of $X$, where
    \begin{align*}
        U_0 &= \{3\} \\
        U_1 &= U_0 \cup \{4,5\} \\
        U_2 &= U_1 \cup \{0,1,2\}.
    \end{align*}
    For a choice of orientation on $L5a1$ and $L7n2$ we obtain 
    $$\#\operatorname{Col}_{U_*}(L5a1) = (1, 9, 30) = \#\operatorname{Col}_{U_*} ( L7n2),$$ and and the also have the same sequence of disrepectful polynomials 
    $$
    \mathbf{dis}_{U_*}^S(L5a1) = (t^{2}, 
t^{0} + 8t^{1},
30t^0) = \mathbf{dis}^S_{U_*}(L7n2).$$

    Thus, the subquandle coloring sequences and the disrespectful polynomial sequences both fail to distinguish these two links.

    However, the sequence of graded disrespectful polynomials are distinct and the polynomials for each link are collected in Tables~\ref{tab:GdistablelinkL5a1} and \ref{tab:GdistablelinkL7n2}.

\begin{table}[ht]
\centering 
\caption{Sequence of graded disrespectful polynomials for $L5a1$.}
\label{tab:GdistablelinkL5a1}
\renewcommand{\arraystretch}{1.8}
\begin{tabular}{c|l}
Filtration Level & Graded Disrespectful Polynomial, $G\mathbf{dis}_{U_i}^S(L5a1)$ \\ \hline
0           & 
$1t_{1,1}^{2}t_{0,1}^{3}$                               \\ \hline
1          & $1t_{1,1}^{2}t_{0,1}^{3} + 4t_{1,1}^{2}t_{1,2}^{1}t_{0,1}^{1}t_{2,1}^{1} + 4t_{1,1}^{3}t_{0,1}^{1}t_{2,1}^{1}$ 
                  \\ \hline
2          & $4t_{1,1}^{2}t_{0,1}^{3} + \mathbf{6t_{1,2}^{1}t_{1,1}^{2}t_{0,1}^{2} + 12t_{1,2}^{2}t_{1,1}^{1}t_{0,1}^{1}t_{2,1}^{1}} + 4t_{1,1}^{2}t_{1,2}^{1}t_{0,1}^{1}t_{2,1}^{1} + 4t_{1,1}^{3}t_{0,1}^{1}t_{2,1}^{1}$

\end{tabular}
\renewcommand{\arraystretch}{1}
\end{table}

\begin{table}[ht]
\centering 
\caption{Sequence of graded disrespectful polynomials for $L7n2$.}
\label{tab:GdistablelinkL7n2}
\renewcommand{\arraystretch}{1.8}
\begin{tabular}{c|l}
Filtration Level & Graded Disrespectful Polynomial, $G\mathbf{dis}_{U_i}^S(L7n2)$ \\ \hline
0            
&
$1t_{1,1}^{2}t_{0,1}^{3}$                               \\ \hline
1          & $1t_{1,1}^{2}t_{0,1}^{3} + 4t_{1,1}^{2}t_{1,2}^{1}t_{0,1}^{1}t_{2,1}^{1} + 4t_{1,1}^{3}t_{0,1}^{1}t_{2,1}^{1}$ 
                  \\ \hline
2          & $4t_{1,1}^{2}t_{0,1}^{3} + \mathbf{6t_{1,2}^{2}t_{1,1}^{1}t_{0,1}^{1}t_{2,1}^{1} + 12t_{1,2}^{1}t_{1,1}^{2}t_{0,1}^{2}} + 4t_{1,1}^{2}t_{1,2}^{1}t_{0,1}^{1}t_{2,1}^{1} + 4t_{1,1}^{3}t_{0,1}^{1}t_{2,1}^{1}$

\end{tabular}
\renewcommand{\arraystretch}{1}
\end{table}

Therefore, since $G\mathbf{dis}_{U_2}^S(L5a1) \neq G\mathbf{dis}_{U_2}^S(L7n2)$, we are able to distinguish between $L5a1$ and $L7n2$ using the sequence of graded disrespectful polynomials.
\end{example}

\begin{example} Let $(X,\triangleright)$ be the following finite quandle with operation defined by the following table:
\[
    \begin{array}{c|cccccccc}
    \triangleright & 0 & 1 & 2 & 3 & 4 & 5 & 6 & 7 \\
    \hline
    0 & 0 & 6 & 6 & 6 & 6 & 0 & 0 & 0 \\
    1 & 4 & 1 & 4 & 2 & 3 & 3 & 2 & 1 \\
    2 & 3 & 3 & 2 & 4 & 1 & 4 & 1 & 2 \\
    3 & 2 & 4 & 1 & 3 & 2 & 1 & 4 & 3 \\
    4 & 1 & 2 & 3 & 1 & 4 & 2 & 3 & 4 \\
    5 & 5 & 0 & 0 & 0 & 0 & 5 & 5 & 5 \\
    6 & 6 & 5 & 5 & 5 & 5 & 6 & 6 & 6 \\
    7 & 7 & 7 & 7 & 7 & 7 & 7 & 7 & 7 \\
    \end{array}.
    \]
Let $S \subseteq \text{End}(X)$ where $S = \{(5, 7, 7, 7, 7, 5, 5, 6)\}.$
Let $U_* = \{U\}_{i = 0, 1, 2, 3}$ be a subquandle filtration of $X$ where 
\begin{align*}
        U_0 &= \{1\} \\
        U_1 &= U_0 \cup \{2, 3, 4\} \\
        U_2 &= U_1 \cup \{0, 5, 6\} \\
        U_3 &= U_2 \cup \{7\}.
    \end{align*}
For a choice of orientation on $L7a2$ and $L7a5$ we have
$$\#\operatorname{Col}_X(L7a5)  = 40= \#\operatorname{Col}_X(L7a2).$$ 
Additionally, the in-degree polynomial fails to distinguish these two links as 
$$
\Phi_X^{\textup{deg}^+,\phi}(L7a5)  = u^{16} + u^9 + 2u^4 + 2u^3 + u + 33 = \Phi_X^{\textup{deg}^+,\phi}(L7a2).
$$
When we compute the sequence of disrespectful polynomials we obtain the following:
    \begin{align*}
        \mathbf{dis}^S_{U_\ast}(L7a5)(t) &= (1t^{1}, 16t^{1}, 9t^{0} + 16t^{1}, 40t^{0}), \\
        \mathbf{dis}^S_{U_\ast}(L7a2)(t) &= (1t^{1}, 4t^{1}, 9t^{0} + 4t^{1}, 40t^{0}).
    \end{align*}
Hence, while the basic coloring invariants and the in-degree polynomials are unable to distinguish these two links, we are able to distinguish $L7a5$ and $L7a2$ using their sequences of disrespectful polynomials.

\end{example}

\section{Acknowlegement}\label{Acknowlegement}
The authors express their gratitude to Hamilton College for supporting this research through funding provided by the Wood Family Gift in STEM. Additionally, the authors acknowledge the use of Gemini 3.1 Pro for language refinement, grammar editing, and improving textual flow. All mathematical results, proofs, and original concepts were developed independently by the authors, who take full responsibility for the final manuscript.

\bibliographystyle{plain}
\bibliography{Ref}

\end{document}